\documentclass{amsart}


\usepackage{amsmath}
\usepackage[colorlinks=true]{hyperref}
\usepackage[english]{babel}
\usepackage{graphicx,subfigure}
\usepackage{a4wide}
\usepackage{multirow}


\newtheorem{theorem}{Theorem}[section]

\theoremstyle{definition}

\newtheorem{example}[theorem]{Example}


\title[A Finite Element Approximation for Time-Fractional Diffusion Equations]{
A Finite Element Approximation for a Class\\
of Caputo Time-Fractional Diffusion Equations}

\author[M. R. Sidi Ammi, I. Jamiai and D. F. M. Torres]{}

\subjclass[2010]{35R11, 65M06}

\keywords{Fractional partial differential equations,
finite element method, finite difference method.}


\email{sidiammi@ua.pt}
\email{i.jamiai@gmail.com}
\email{delfim@ua.pt}


\thanks{$^*$Corresponding author: Delfim F. M. Torres (delfim@ua.pt)}

\AtBeginDocument{{\noindent\small
This is a preprint of a paper whose final and definite form is with 
\emph{Computers and Mathematics with Applications}, ISSN 0898-1221.
Submitted 10-Oct-2018; revised 18-May-2019; accepted for publication 25-May-2019.}
\vspace{9mm}}


\begin{document}

\maketitle

\centerline{\scshape Moulay Rchid Sidi Ammi and Ismail Jamiai}
\medskip
{\footnotesize
 \centerline{AMNEA Group, Department of Mathematics,}
 \centerline{Faculty of Sciences and Technics, Moulay Ismail University,}
 \centerline{B.P. 509, Errachidia, Morocco}}

\medskip

\centerline{\scshape Delfim F. M. Torres$^*$}
\medskip
{\footnotesize
\centerline{Center for Research and Development in Mathematics and Applications (CIDMA),}
\centerline{Department of Mathematics, University of Aveiro,}
\centerline{3810-193 Aveiro, Portugal}}

\bigskip


\begin{abstract}
We develop a fully discrete scheme for time-fractional diffusion equations
by using a finite difference method in time and a finite element method in space.
The fractional derivatives are used in Caputo sense. Stability and error estimates
are derived. The accuracy and efficiency of the presented method is shown
by conducting two numerical examples.
\end{abstract}


\section{Introduction}

Fractional calculus is the field of mathematical analysis that deals with the
investigation and application of integrals and derivatives of arbitrary order.
The fractional calculus may be considered an old topic, starting from some
speculations of Leibniz and Euler, respectively in the 17th and 18th centuries,
and yet a recent subject under strong development \cite{MR3787674,MR3822307,MR3854267}.

In recent years, time-fractional partial differential equations (TFPDEs)
have aroused a considerable interest among mathematicians and also
have been applied broadly in various applications of numerical analysis
in different research areas, including fractal phenomena, diffusion processes,
complex networks, stochastic interfaces, synoptic climatology, option
pricing mechanisms, medical image processing, electromagnetic, electro-chemistry
and material sciences, and chaotic dynamics of nonlinear systems
\cite{MR3830855,MR3859787,MR3845039}.
In view of the importance of TFPDEs, many researchers investigate
them in both analytical and numerical frameworks. Several works and methods have been
developed, such as finite difference methods \cite{6,26,30,34,ZSW},
finite element methods \cite{8,11}, spectral methods \cite{XJCJ},
Adomian decomposition methods \cite{36}, and variational iteration methods \cite{10}.
Regarding analytical solutions to TFDEs, one can use
Green and Fox functions and their properties,
similarity methods, and Fourier--Laplace transforms
or Wright functions \cite{8bis,9,10bis,11bis}.

Here we study a numerical approach to the following
initial-boundary value time-fractional Caputo diffusion problem:
\begin{equation}
\label{eq:01}
\begin{split}
& ^C_0 D^{\alpha}_t u(x, t) - \Delta u(x, t) = f(x, t),
\quad x \in \Omega, \quad t \in [ 0, T], \\
& u(x, 0) = u_0(x), \quad x \in \Omega, \\
& u(x, t) = 0, \quad x \in \partial \Omega,
\quad t \in [0, T],
\end{split}
\end{equation}
where $\alpha$ is the order of the time-fractional derivative,
$0 < \alpha < 1$, and $\Omega$ is a bounded open domain in $\mathbb{R}^d$,
$ 1 \leq d \leq 3$. The operator $^C_0 D^{\alpha}_t$ is the
Caputo fractional derivative of order $\alpha$
of function $u(x, t)$, defined by
\begin{equation*}
^C_0 D^{\alpha}_t u(x, t) = \frac{1}{\Gamma(1 - \alpha)}
\int_0^t \frac{\partial u(x, s)}{\partial s} \frac{ds}{(t-s)^{\alpha} },
\quad 0<\alpha<1,
\end{equation*}
where $ \Gamma$ denotes the Gamma function. In \cite{yc}, some analytical solutions
of the time-fractional diffusion equation \eqref{eq:01} with a vanishing forcing term
(i.e., $f(x, t) \equiv 0$) are obtained, by applying the finite sine and Laplace
transforms based on the fundamental Mittag--Leffler function.
It is a hard task to search and to compute the exact solution, especially for large time,
due to slow convergence of the series of the Mittag--Leffler function. Therefore,
developing efficient numerical methods is a significant question, and considerable efforts
have been devoted to develop numerical algorithms for this class of problems.
In general, finite difference methods and finite element methods are the most accepted
approaches for solving FPDEs. For instance, in \cite{yc} a practical finite difference/Legendre
spectral method to solve the initial-boundary value time-fractional diffusion problem
\eqref{eq:01}, on a finite domain, is considered. A finite element method for the time-fractional partial
differential equation \eqref{eq:01} on the sense of Riemann--Liouville is introduced
in \cite{NJY} and optimal order error estimates, both in semi-discrete and fully discrete cases,
are obtained. Sidi Ammi and Jamiai have presented also a finite difference and Legendre
spectral method for a time-fractional diffusion-convection equation for image restoration
and a detailed error analysis was carried out \cite{sidiammi1}. In \cite{sidiammi2},
Sidi Ammi and Torres consider a fractional nonlocal thermistor problem and
develop a Galerkin spectral method.  Some error estimates, in different contexts,
are derived, showing that the combination of the backward differentiation in time
and the Galerkin spectral method in space leads, for an enough smooth solution,
to an approximation of exponential convergence in space \cite{sidiammi2}.
Existence and uniqueness of solution for the fractional partial differential equation \eqref{eq:01},
with a left time Riemann--Liouville fractional derivative, is proved in \cite{NJY}
by using the Lax--Milgram Lemma. Here we propose a finite difference method in time
and a finite element method in space to study the numerical solution of the
time-fractional Caputo differential equation \eqref{eq:01}.

The outline of the paper is as follows. In Section~\ref{2},
a finite difference scheme for solving the time-fractional diffusion
equation is proposed, along with an unconditionally stability and convergence analysis.
In Section~\ref{3}, the finite element method is used and error estimates,
in both time and space, are obtained. Then, some numerical tests are presented
in Section~\ref{4}, to verify the accuracy of the given method, comparing the obtained
approximate results with the theoretical/exact ones. Some concluding remarks are given
in Section~\ref{5}. In the analysis of the numerical method that follows,
we assume that problem \eqref{eq:01} has a unique and enough regular solution.


\section{Discretization in time: a finite difference scheme}
\label{2}

In this section we consider the time discretization of \eqref{eq:01}.
Define $A = -\Delta$ and $D(A) = H_0^1(\Omega) \cap H^2(\Omega)$.
Then the system \eqref{eq:01} can be written, in abstract form, as
\begin{align}
\label{eq:05}
& ^C_0 D^{\alpha}_t u(t) + Au(t) = f(t),
\quad 0<t<T, \quad 0< \alpha < 1,\\
\label{eq:06}
& u(0) = u_0.
\end{align}
Let $0=t_0<t_1<\cdots<t_K=T$ be a partition of $ [0, T]$, where
$t_{k}= k \Delta t$, $k=0, 1, \ldots, K$, and $\Delta t= \frac{T}{K}$
is the time step. Following \cite{yc,sidiammi1}, we discretize
the Caputo derivative by a difference approach as follows:
for all $0 \leq k \leq K-1$,
\begin{equation*}
\begin{split}
_0^CD_t^{\alpha}u(t_{k+1})
& =\frac{1}{\Gamma(2-\alpha)\Delta t^\alpha}\sum_{j=0}^k
\left(u(t_{k+1-j})-u(t_{k-j})\right)((j+1)^{1-\alpha}
- j^{1-\alpha}) +\tilde{R}_{k+1},
\end{split}
\end{equation*}
where $\tilde{R}_{k+1}$ is the truncation error satisfying
\begin{equation}
\label{eq:13}
\tilde{R}_{k+1} \leq c_u \Delta t^{2-\alpha}
\end{equation}
and $c_u$ is a constant depending only on $u$.
To continue the construction of the scheme, let us denote
$b_j = (j+1)^{1-\alpha} - j^{1-\alpha}$, $j=0,1,\ldots,k$.
It is easy to verify the following properties for $b_{j}$:
\begin{equation}
\label{eq:20}
\begin{array}{lll}
& b_j > 0, \quad j=0,1,\ldots,k,\\
& 1 = b_0 > b_1 > \cdots > b_k, \quad b_k
\longrightarrow 0 \quad as ~ k \longrightarrow \infty,\\
& \sum_{j=0}^{k}(b_j - b_{j+1}) + b_{k+1}
= (1-b_1) + \sum_{j=1}^{k-1}(b_j - b_{j+1}) + b_k = 1.
\end{array}
\end{equation}
Define the discretized fractional operator $L_{t}^{\alpha}$ by
\begin{equation*}
L_{t}^{\alpha}u(t_{k+1}) =\frac{1}{\alpha_{0}} \left(u(t_{k+1})
- (1-b_1)u(t_k) - \sum_{j=1}^{k-1} (b_j-b_{j+1})u(t_{k-j}) - b_ku(t_0)\right)
\end{equation*}
with
\begin{equation*}
\alpha_0 := \Gamma(2 - \alpha)\Delta t^{\alpha}.
\end{equation*}
Then,
\begin{equation}
\label{eq:14}
\begin{split}
_0^CD_t^{\alpha}u(t_{k+1})
&=\frac{1}{\alpha_0}\left(u(t_{k+1}) - (1-b_1)u(t_k)
- \sum_{j=1}^{k-1} (b_j-b_{j+1})u(t_{k-j}) - b_ku(t_0)\right)
+ \tilde{R}_{k+1}\\
&= L_t^{\alpha}u(t_{k+1}) + \tilde{R}_{k+1}.
\end{split}
\end{equation}
Let $ t=t_{k+1}$. We can write \eqref{eq:05} as
\begin{equation}
\label{eq:07}
L^{\alpha}_t u(t_{k+1}) + Au(t_{k+1})
= f(t_{k+1}) - \tilde{R}_{k+1},
\quad k=0,1,\ldots,K-1.
\end{equation}
Denote $u^k \approx u(t_k)$ as the approximation of $ u(t_k)$.
We define the following time stepping method:
\begin{equation}
\label{eq:21}
L^{\alpha}_t u^{k+1} + Au^{k+1}
= f^{k+1}, \quad k=0,1,\ldots,K-1.
\end{equation}
To complete the semi-discrete problem, we consider the boundary conditions
\begin{equation}
\label{eq:10}
u^{k+1}(x) = 0, \quad k \geq 0,
\quad x \in \partial \Omega,
\end{equation}
and the initial condition
\begin{equation*}
u^0(x) =u_0(x), \quad x \in \Omega.
\end{equation*}
We then obtain an equivalent form to \eqref{eq:21}:
\begin{equation}
\label{eq:08}
u^{k+1} + \alpha_0 Au^{k+1} = \alpha_0 f^{k+1} + (1-b_1)u^k
+ \sum_{j=1}^{k-1} (b_j-b_{j+1})u^{k-j} + b_ku^0, \quad k \geq 1,
\end{equation}
where $ f^{k+1} = f(t_{k+1})$.
For the particular case $k=0$, the scheme becomes
\begin{equation}
\label{eq:09}
u^1 + \alpha_0 A u^1 = \alpha_0 f^1 + u^0.
\end{equation}
If we define the error term $ r^{k+1}$ by
\begin{equation}
\label{eq:12}
r^{k+1} := \alpha_0 \left( _0^CD_t^{\alpha}
u(t_{k+1}) - L_t^{\alpha} u(t_{k+1}) \right),
\end{equation}
then it follows from \eqref{eq:13} and \eqref{eq:14} that
\begin{equation}
\label{eq:15}
|r^{k+1}| = \alpha_0 |r^{k+1}_{\Delta t}|
\leq c_u \Delta t^2.
\end{equation}
Now we define some functional spaces endowed with standard norms
and inner products that will be used in the remaining of the paper:
\begin{equation*}
H^1(\Omega) := \left\{v \in L^2(\Omega), \nabla v \in L^2(\Omega) \right\},
\end{equation*}
\begin{equation*}
H^1_0(\Omega) := \left\{v \in H^1(\Omega), v|_{\partial \Omega}=0 \right\},
\end{equation*}
\begin{equation*}
H^m(\Omega) := \left\{v \in L^2(\Omega), \frac{d^kv}{dx^k} \in L^2(\Omega)
~ \text{ for all positive integer } k \leq m \right\},
\end{equation*}
where $L^2(\Omega)$ is the space of measurable functions whose square
is Lebesgue integrable in $\Omega$. The inner products of $L^2(\Omega)$
and $H^1(\Omega)$ are defined, respectively, by
\begin{equation*}
(u, v) = \int_{\Omega}uvdx,
\quad (u, v)_1 = (u, v) + (\nabla u, \nabla v),
\end{equation*}
while the corresponding norms are given by
\begin{equation*}
\Vert v \Vert = (v, v)^{\frac{1}{2}}, \quad \Vert v \Vert_1
= (v, v)_1^{\frac{1}{2}}, \quad \text{and} \quad
\Vert v \Vert_2 = \Vert v \Vert_{H^2} = \left(
\sum_{k \leq 2}\left\Vert \frac{d^kv}{dx^k}
\right\Vert^2 \right)^{1/2}.
\end{equation*}
The variational weak formulation of equation \eqref{eq:08} subject
to the boundary condition \eqref{eq:10} reads:
find $u^{k+1} \in H^1_0(\Omega)$ such that
\begin{equation}
\label{eq:111}
(u^{k+1}, v) + \alpha_0 (Au^{k+1}, v)
= \alpha_0 (f^{k+1}, v)
+ (1-b_1)(u^k, v) + \sum_{j=1}^{k-1}
(b_j-b_{j+1})(u^{k-j}, v) + b_k(u^0, v)
\end{equation}
$\forall v \in H^1_0(\Omega)$.
Now we consider a stability result of the time discretization
of equations \eqref{eq:01}.

\begin{theorem}
\label{th:01}
Let $ u^k$ be the approximation solution of \eqref{eq:08}. Then,
\begin{equation*}
\Vert u^k \Vert \leq \Vert u^0 \Vert + \alpha_0
k \Vert f \Vert_{L^{\infty}}, \quad k = 1,2,\ldots,K.
\end{equation*}
\end{theorem}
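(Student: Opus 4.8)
The plan is to prove the estimate by induction on $k$, using the variational formulation \eqref{eq:111} with the test function $v = u^{k+1}$ and exploiting the coercivity of the operator $A$ together with the convexity-type properties of the coefficients $b_j$ listed in \eqref{eq:20}.

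First I would establish the base case $k=0$. Taking $v=u^1$ in the weak form of \eqref{eq:09}, namely $(u^1,u^1) + \alpha_0(Au^1,u^1) = \alpha_0(f^1,u^1) + (u^0,u^1)$, I would note that $(Au^1,u^1) = \|\nabla u^1\|^2 \geq 0$ since $A=-\Delta$ with homogeneous Dirichlet conditions. Hence $\|u^1\|^2 \leq \alpha_0(f^1,u^1) + (u^0,u^1) \leq \big(\alpha_0\|f^1\| + \|u^0\|\big)\|u^1\|$ by Cauchy--Schwarz, and dividing by $\|u^1\|$ gives $\|u^1\| \leq \|u^0\| + \alpha_0\|f\|_{L^\infty}$, which is the claimed bound for $k=1$.

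Next, for the inductive step, I would assume $\|u^j\| \leq \|u^0\| + \alpha_0 j \|f\|_{L^\infty}$ for all $j \leq k$ and prove it for $k+1$. Taking $v = u^{k+1}$ in \eqref{eq:111} and dropping the nonnegative term $\alpha_0(Au^{k+1},u^{k+1}) = \alpha_0\|\nabla u^{k+1}\|^2 \geq 0$, I get
\begin{equation*}
\|u^{k+1}\|^2 \leq \alpha_0(f^{k+1},u^{k+1}) + (1-b_1)(u^k,u^{k+1}) + \sum_{j=1}^{k-1}(b_j-b_{j+1})(u^{k-j},u^{k+1}) + b_k(u^0,u^{k+1}).
\end{equation*}
Applying Cauchy--Schwarz to each inner product on the right and using that all the weights $1-b_1$, $b_j-b_{j+1}$ (for $1\le j\le k-1$), and $b_k$ are nonnegative by \eqref{eq:20}, this yields
\begin{equation*}
\|u^{k+1}\| \leq \alpha_0\|f\|_{L^\infty} + (1-b_1)\|u^k\| + \sum_{j=1}^{k-1}(b_j-b_{j+1})\|u^{k-j}\| + b_k\|u^0\|
\end{equation*}
after dividing through by $\|u^{k+1}\|$. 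Now I would insert the inductive hypothesis $\|u^{k-j}\| \leq \|u^0\| + \alpha_0(k-j)\|f\|_{L^\infty}$ (and $\|u^k\|\le\|u^0\|+\alpha_0 k\|f\|_{L^\infty}$, $\|u^0\|=\|u^0\|$) into the right-hand side. The coefficients sum to $1$ by the third identity in \eqref{eq:20}, so the $\|u^0\|$ contributions collapse to a single $\|u^0\|$; bounding each index factor $(k-j)$ and $k$ crudely by $k$ gives a remaining term of at most $\alpha_0 k\|f\|_{L^\infty}$ from the convex combination, plus the explicit $\alpha_0\|f\|_{L^\infty}$ from the forcing term, for a total of $\|u^0\| + \alpha_0(k+1)\|f\|_{L^\infty}$, completing the induction.

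The main technical point, rather than an obstacle, is the bookkeeping in the final step: one must verify that replacing every index coefficient in the convex combination by $k$ is legitimate (each is $\le k$, weights are nonnegative and sum to $1$) so that $\sum$(weights)$\cdot\alpha_0(\text{index})\|f\|_{L^\infty} \le \alpha_0 k\|f\|_{L^\infty}$, and then that adding the forcing contribution bumps this to $k+1$. I expect no genuine difficulty here provided the sign properties \eqref{eq:20} and the coercivity $(Av,v)\ge 0$ are invoked carefully; the only place to be careful is ensuring the division by $\|u^{k+1}\|$ is valid, which is trivial if $u^{k+1}=0$ (bound holds automatically) and otherwise legitimate.
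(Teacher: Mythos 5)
Your proof is correct, and it reaches the paper's estimate by a slightly different mechanism at the key step. The overall skeleton is the same as the paper's: induction on $k$, Cauchy--Schwarz on each term of the right-hand side, and the telescoping identity $(1-b_1)+\sum_{j=1}^{k-1}(b_j-b_{j+1})+b_k=1$ from \eqref{eq:20} to collapse the convex combination. Where you differ is in how the one-step stability bound is obtained: the paper works with the operator form $(I+\alpha_0 A)u^{k+1}=\text{RHS}$ and invokes the resolvent estimate $\Vert (I+\alpha_0 A)^{-1}\Vert<1$, justified by a spectral argument (it even quotes the eigenvalues $\lambda_j=j^2\pi^2$, which strictly speaking is specific to the one-dimensional unit interval, though only positivity of the spectrum is actually needed). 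You instead test the weak form \eqref{eq:111} with $v=u^{k+1}$, drop the nonnegative coercive term $\alpha_0(\nabla u^{k+1},\nabla u^{k+1})\ge 0$, and divide by $\Vert u^{k+1}\Vert$ (handling $u^{k+1}=0$ trivially). Your energy-type argument buys generality and robustness: it needs only $(Av,v)\ge 0$, so it works verbatim in any dimension and for any symmetric positive semidefinite elliptic operator, and it avoids introducing the inverse operator and its norm altogether. The paper's resolvent route is marginally shorter to state and yields a strict contraction, but that extra strength is never used. One shared (and harmless) imprecision in both arguments is bounding $\Vert f^{k+1}\Vert$ by $\Vert f\Vert_{L^\infty}$, which implicitly means the $L^\infty$-in-time norm of the spatial $L^2$ norm; your bookkeeping in the final convex-combination step, bounding each index $k-j$ by $k$, is exactly right.
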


\begin{proof}
The result is proven by mathematical induction.
First, when $k=0$, we have
\begin{equation*}
(I + \alpha_0 A) u^1 = \alpha_0 f^1 + u^0.
\end{equation*}
Then we get
\begin{equation*}
u^1 = (I + \alpha_0 A)^{-1} (\alpha_0 f^1 + u^0).
\end{equation*}
Note that $A$ is a positive definite elliptic operator, the eigenvalues
of $A$ are $\lambda_j = j^2 \pi^2$, $j=1,2,3\ldots$
It follows from the spectral method that the norm
\begin{equation}
\label{eq:18}
\Vert (I + \alpha_0 A)^{-1} \Vert = \sup_{\lambda_j > 0}\vert
(1 + \alpha_0 \lambda_j)^{-1} \vert < 1.
\end{equation}
Hence, by using \eqref{eq:18}, we have
\begin{equation*}
\Vert u^1 \Vert \leq \Vert u^0 + \alpha_0 f^1 \Vert.
\end{equation*}
Then,
\begin{equation*}
\Vert u^1 \Vert \leq \Vert u^0 \Vert
+ \alpha_0 \Vert f \Vert_{L^{\infty}},
\end{equation*}
which suggests the result at the first step.
Suppose now that the following hypothesis holds:
\begin{equation}
\label{eq:19}
\Vert u^j \Vert \leq \Vert u^0 \Vert + j \alpha_0 \Vert f
\Vert_{L^{\infty}}, \quad \forall j=1,2,\ldots,k.
\end{equation}
We begin to prove that $\Vert u^{k+1} \Vert \leq \Vert u^0 \Vert
+ \alpha_0 (k+1)\Vert f \Vert_{L^{\infty}}$.
From \eqref{eq:08}, we have
\begin{equation*}
(I + \alpha_0 A)u^{k+1} = \alpha_0 f^{k+1} + (1-b_1)u^k
+ \sum_{j=1}^{k-1} (b_j-b_{j+1})u^{k-j} + b_ku^0.
\end{equation*}
Hence, by using \eqref{eq:18} and \eqref{eq:19}, one has
\begin{equation*}
\Vert u^{k+1} \Vert \leq  \alpha_0 \Vert f \Vert_{L^{\infty}}
+ \left( (1-b_1) + \sum_{j=1}^{k-1} (b_j-b_{j+1}) + b_k \right)
\left(\Vert u^0 \Vert + \alpha_0 k\Vert f \Vert_{L^{\infty}}\right).
\end{equation*}
Finally, the last equality of \eqref{eq:20} yields
\begin{equation*}
\Vert u^{k+1} \Vert \leq \Vert u^0 \Vert
+ \alpha_0 (k+1)\Vert f \Vert_{L^{\infty}}.
\end{equation*}
The proof is complete.
\end{proof}

We are now ready to prove error estimates in the $L^{2}$
norm for the error $u(t_k) - u^k $ of the approximate solution
$u^k $ of $u$, the exact weak solution $u(t_k)$ of \eqref{eq:21}.

\begin{theorem}
Let $u(t_k)$ and $ u^k$ be the solution of
\eqref{eq:07} and \eqref{eq:21}, respectively. Then,
\begin{equation*}
\Vert u(t_k) - u^k \Vert \leq \frac{c_u}{1 - \alpha}
T^{\alpha} \Delta t^{2 - \alpha},
\end{equation*}
$k = 1,2,\ldots,K$.
\end{theorem}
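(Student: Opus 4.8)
The plan is to introduce the error $e^k := u(t_k) - u^k$, derive a recursion for it, and close that recursion by induction with a suitably weighted hypothesis. First I would subtract the scheme \eqref{eq:21} from the exact identity \eqref{eq:07} at $t = t_{k+1}$. The forcing terms $f^{k+1}$ cancel, and recalling \eqref{eq:12} and $e^0 = 0$, multiplication by $\alpha_0$ gives
\begin{equation*}
(I + \alpha_0 A) e^{k+1} = (1-b_1) e^k + \sum_{j=1}^{k-1} (b_j - b_{j+1}) e^{k-j} + b_k e^0 - r^{k+1}, \quad k \ge 1,
\end{equation*}
and $(I + \alpha_0 A) e^1 = e^0 - r^1$ in the case $k=0$. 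Taking $L^2$ norms, using the bound $\|(I+\alpha_0 A)^{-1}\| < 1$ from \eqref{eq:18}, the estimate \eqref{eq:15}, and $e^0 = 0$, this yields the scalar inequality
\begin{equation*}
\|e^{k+1}\| \le (1-b_1) \|e^k\| + \sum_{j=1}^{k-1} (b_j - b_{j+1}) \|e^{k-j}\| + c_u \Delta t^2.
\end{equation*}

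The core of the argument is the induction hypothesis $\|e^j\| \le c_u \Delta t^2 / b_{j-1}$ for $j = 1, \dots, k$. The base case is immediate since $\|e^1\| \le \|r^1\| \le c_u \Delta t^2 = c_u \Delta t^2 / b_0$, using $b_0 = 1$. For the inductive step, note that since $(b_j)$ is decreasing by \eqref{eq:20}, the sequence $(1/b_{j-1})$ is increasing, so every term $\|e^k\|$ and $\|e^{k-j}\|$ on the right is bounded by $c_u \Delta t^2 / b_{k-1}$; factoring this out and using the telescoping identity $(1-b_1) + \sum_{j=1}^{k-1}(b_j - b_{j+1}) = 1 - b_k$ from \eqref{eq:20} reduces the estimate to
\begin{equation*}
\|e^{k+1}\| \le c_u \Delta t^2 \, \frac{1 - b_k + b_{k-1}}{b_{k-1}}.
\end{equation*}
It then remains to verify the elementary inequality $\frac{1 - b_k + b_{k-1}}{b_{k-1}} \le \frac{1}{b_k}$, which after clearing the positive denominators is equivalent to $b_k(1 - b_k) \le b_{k-1}(1 - b_k)$, hence to $b_k \le b_{k-1}$ — true by \eqref{eq:20}. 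This completes the induction, giving $\|e^k\| \le c_u \Delta t^2 / b_{k-1}$ for all $k$.

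Finally I would convert this into the stated form. Writing $b_{k-1} = k^{1-\alpha} - (k-1)^{1-\alpha}$ and applying the mean value theorem to $x \mapsto x^{1-\alpha}$, whose derivative $(1-\alpha)x^{-\alpha}$ is decreasing, gives $b_{k-1} \ge (1-\alpha) k^{-\alpha}$, hence $1/b_{k-1} \le k^\alpha / (1-\alpha)$. Since $k^\alpha = t_k^\alpha \Delta t^{-\alpha} \le T^\alpha \Delta t^{-\alpha}$, we conclude $\|e^k\| \le \frac{c_u}{1-\alpha} T^\alpha \Delta t^{2-\alpha}$, as claimed. I expect the only genuinely non-routine points to be the choice of the weight $1/b_{k-1}$ in the induction hypothesis — a plain uniform bound fails, because the telescoped coefficients sum to $1$ and the $c_u \Delta t^2$ term would accumulate unboundedly over the $k$ steps — and the lower bound on $b_{k-1}$ that recovers the loss of $\Delta t^\alpha$; the rest is bookkeeping.
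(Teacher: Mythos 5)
Your proposal is correct and follows essentially the same route as the paper: the same error recursion, the same weighted induction hypothesis $\Vert e^k\Vert \le c_u\,b_{k-1}^{-1}\Delta t^2$ closed via \eqref{eq:18}, \eqref{eq:15} and the telescoping identity in \eqref{eq:20}, and the same final conversion using $b_{k-1}^{-1}\le k^{\alpha}/(1-\alpha)$ and $k\Delta t\le T$. The only differences are cosmetic bookkeeping in the inductive step (you compare $1/b_{k-1}$ to $1/b_k$ via an elementary inequality, the paper bounds every weight by $b_k^{-1}$ directly) and that you supply the mean value theorem argument the paper dismisses as ``easily verified.''
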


\begin{proof}
We start by proving the following estimate:
\begin{equation}
\label{eq:24}
\Vert u(t_k) - u^k \Vert \leq c_u b^{-1}_{k-1} \Delta t^2,
\quad k = 1,2,\ldots,K.
\end{equation}
For that we use a standard induction procedure.
Let $\varepsilon^k=u(t_k)-u^k$. For $k=1$, we have, by calling together
\eqref{eq:07}, \eqref{eq:09} and \eqref{eq:12}, that the error equation
is given by
\begin{equation*}
(I + \alpha_0 A) \varepsilon^1 = \varepsilon^0 + r^1.
\end{equation*}
Hence, by using \eqref{eq:18}, we have
\begin{equation*}
\Vert \varepsilon^1 \Vert = \Vert (I + \alpha_0A)^{-1}
r^1 \Vert \leq \Vert r^1 \Vert.
\end{equation*}
With this in mind, and applying \eqref{eq:15}, one obtains that
\begin{equation*}
\Vert u(t_1) - u^1 \Vert \leq c_u b_0^{-1} \Delta t^2.
\end{equation*}
So, \eqref{eq:24} is true for the case $ k= 1$.
Suppose now that \eqref{eq:24} holds for all $k = 1,2,\ldots,K-1$.
By gathering \eqref{eq:07} and \eqref{eq:08}, we have
\begin{equation*}
\varepsilon^{k+1} = (I + \alpha_0A)^{-1} \left( (1-b_1)\varepsilon^k
+ \sum_{j=1}^{k-1} (b_j-b_{j+1})\varepsilon^{k-j}
+ b_k\varepsilon^0  + r^{k+1}\right).
\end{equation*}
It follows that
\begin{equation*}
\Vert \varepsilon^{k+1} \Vert \leq  (1-b_1) \Vert \varepsilon^k \Vert
+ \sum_{j=1}^{k-1} (b_j-b_{j+1}) \Vert \varepsilon^{k-j} \Vert
+ b_k \Vert \varepsilon^0 \Vert  + \Vert r^{k+1} \Vert.
\end{equation*}
Using the induction assumption, and the fact that the sequence
$(b_{j})_{j}$ is decreasing, we obtain that
\begin{equation*}
\begin{split}
\Vert \varepsilon^{k+1} \Vert
& \leq \left( (1-b_1) b_{k-1}^{-1}
+ \sum_{j=1}^{k-1} (b_j-b_{j+1}) b_{k-j-1}^{-1} \right)
c_u \Delta t^2 + c_u \Delta t^2\\
& \leq \left( (1-b_1) + \sum_{j=1}^{k-1} (b_j-b_{j+1})
+ b_k \right) c_u b_k^{-1}\Delta t^2.
\end{split}
\end{equation*}
Taking into account \eqref{eq:20} in the above inequality,
it follows that
\begin{equation*}
\Vert \varepsilon^{k+1} \Vert \leq c_u b_k^{-1} \Delta t^2.
\end{equation*}
The auxiliary estimate \eqref{eq:24} is then established.
One can easily verify that $k^{- \alpha} b_{k-1}^{-1} \leq \frac{1}{1- \alpha}$
and $k \Delta t \leq T$, $ k=1,2,\ldots,K$. Thus,
\begin{equation*}
\begin{split}
\Vert u(t_k) - u^k \Vert
& \leq c_u k^{- \alpha} b^{-1}_{k-1} k^{\alpha} \Delta t^2
\leq \frac{c_u}{1- \alpha}(k \Delta t)^{\alpha} \Delta t^{2 - \alpha}\\
&  \leq \frac{c_u}{1- \alpha}T^{\alpha} \Delta t^{2 - \alpha}
\end{split}
\end{equation*}
and the proof is complete.
\end{proof}

In the coming section, we consider the space discretization
of \eqref{eq:01}.


\section{Discretization in space: a finite element scheme}
\label{3}

The variational formulation of \eqref{eq:01} consists
to find $ u(t) \in H_0^1(\Omega)$, such that
\begin{equation*}
(_0^CD_t^{\alpha}u(t),v) + (\nabla u(t), \nabla v)
= (f(t), v), \quad \forall v \in H_0^1(\Omega).
\end{equation*}
More precisely, let $ 0=x_0<x_1<\cdots<x_N=1$ be an arbitrary space partition
of $\Omega = [ 0, 1] \subset \mathbb{R}$ and let $h=\max_{i}(x_{i+1}-x_{i})$.
The set $\Omega$ can be a set of $\mathbb{R}^{n}$, $1 \leq n \leq 3$.
Let $ S_{h} \subseteq H_{0}^{1}(\Omega)$ be a family of a finite element space
consisting of piecewise linear continuous functions defined by
$$
S_h=\left\{ v_h / v_h
\text{ is a piecewise linear continuous function on }
\Omega \right\}.
$$
Now consider the finite element method as follows:
find $ u_h(t) \in S_h$, such that
\begin{equation*}
(_0^CD_t^{\alpha}u_h(t),\phi) + (\nabla u_h(t), \nabla \phi)
= (f(t), \phi), \quad \forall \phi \in S_h.
\end{equation*}
Denote $A_h = - \Delta_h~ :~ S_h~\rightarrow~S_h$, which satisfies
\begin{equation*}
(A_h u_h, \phi) = (\nabla u_h, \nabla \phi), \quad \forall \phi \in S_h.
\end{equation*}
Let $ P_h~:~ H^{1}_{_{0}}(\Omega)~\rightarrow~S_h$ be the standard
$L_2$ projection operator via the orthogonal relation
\begin{equation*}
(P_h v, \phi) = (v , \phi), \quad \forall \phi \in S_h, \quad v \in L^{2}(\Omega)
\end{equation*}
and $R_h~:~H_0^1(\Omega)~\rightarrow~S_h$ be the elliptic
or the Ritz projection defined by
\begin{equation*}
(\nabla (R_h u), \nabla \phi) = (\nabla u, \nabla \phi),
\quad \forall \phi \in S_h.
\end{equation*}
We can write \eqref{eq:05} into abstract form as
\begin{equation}
\label{eq:26}
^C_0 D^{\alpha}_t u_h(t) + A_hu_h(t) = P_hf(t), \quad 0<t<T.
\end{equation}
Denote by $u_h^{j}$ the approximation of $u(x, t_j)$.
We define the following time stepping method:
\begin{equation}
\label{eq:27}
L^{\alpha}_t u^{k+1}_h + A_hu^{k+1}_h
= f^{k+1}, \quad k=0,1,\ldots,K-1.
\end{equation}
Now consider the finite element discretization of problem \eqref{eq:111}
as follows: find $u^{k+1}_h \in S_h$, such that for all $v_h \in S_h$
\begin{equation}
\label{eq:112}
B_h(u^{k+1}_h, v_h) = F_h(v_h),
\quad \forall v_h \in S_h,
\end{equation}
where the bilinear form $B_h(\cdot, \cdot)$ is defined by
$$
B_h(u^{k+1}_h, v_h) = (u^{k+1}_h, v_h)
+ \alpha_0 (A_hu^{k+1}_h, v_h)
$$
and the functional $F_h(v_h)$ is given by
$$
F_h(v_h) = \alpha_0 (f^{k+1}_h, v_h) + (1-b_1)(u^k_h, v_h)
+ \sum_{j=1}^{k-1} (b_j-b_{j+1})(u^{k-j}_h, v_h) + b_k(u^0_h, v_h).
$$
Then, under enough regularity of the exact solution $u$,
the following error estimate holds.

\begin{theorem}
Let $u(t_k)$ and $ u_h^k$ be the solution of \eqref{eq:05} and \eqref{eq:27},
respectively. Assume that $u \in H^1\left(H^2(\Omega) 
\cap H^1_0(\Omega), [0, T]\right)$. 
Then, the following inequality holds:
\begin{equation}
\label{eq:28}
\Vert u( t_k) - u_h^k \Vert
\leq O\left(\Delta t^{2-\alpha} + h^2\right).
\end{equation}
\end{theorem}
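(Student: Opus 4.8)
The plan is to follow the classical Thom\'ee-type splitting, adapted to the fractional time stepping, and to recycle the inductive $b_j$-argument already used to prove the time-error estimate above. Write the total error as
\[
u(t_k) - u_h^k = \bigl(u(t_k) - R_h u(t_k)\bigr) + \bigl(R_h u(t_k) - u_h^k\bigr) =: \rho^k + \theta^k,
\qquad \theta^k \in S_h .
\]
For the projection part, the standard $O(h^2)$ approximation property of the Ritz projection $R_h$, together with the regularity hypothesis $u \in H^1\bigl(H^2(\Omega)\cap H^1_0(\Omega),[0,T]\bigr)$, which embeds into $C\bigl([0,T];H^2(\Omega)\bigr)$, gives $\Vert\rho^k\Vert\le C h^2\Vert u(t_k)\Vert_2 \le Ch^2$, uniformly in $k$; this is the source of the $h^2$ term in \eqref{eq:28}.

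Next I would derive the error equation for $\theta^k$. Testing the weak form of \eqref{eq:01} at $t=t_{k+1}$ against $\phi\in S_h$, using the definitions of $R_h$ and $A_h$ to replace $(\nabla u(t_{k+1}),\nabla\phi)$ by $(A_hR_hu(t_{k+1}),\phi)$, inserting the consistency identity \eqref{eq:14}, and subtracting \eqref{eq:27} with the natural initialization $u_h^0=R_hu_0$ (so $\theta^0=0$), one gets
\[
(I+\alpha_0A_h)\theta^{k+1} = (1-b_1)\theta^k + \sum_{j=1}^{k-1}(b_j-b_{j+1})\theta^{k-j} + b_k\theta^0 - \alpha_0P_h\bigl(L_t^\alpha\rho^{k+1}+\tilde R_{k+1}\bigr).
\]
Since $A_h$ is symmetric positive definite on $S_h$, the bound $\Vert(I+\alpha_0A_h)^{-1}\Vert<1$ of \eqref{eq:18} applies, and with $\Vert P_h\Vert\le1$ this yields the scalar recursion
\[
\Vert\theta^{k+1}\Vert \le (1-b_1)\Vert\theta^k\Vert + \sum_{j=1}^{k-1}(b_j-b_{j+1})\Vert\theta^{k-j}\Vert + b_k\Vert\theta^0\Vert + \alpha_0\Vert L_t^\alpha\rho^{k+1}\Vert + \alpha_0\Vert\tilde R_{k+1}\Vert .
\]

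The two forcing terms must then be estimated. From \eqref{eq:13}, $\alpha_0\Vert\tilde R_{k+1}\Vert\le\Gamma(2-\alpha)c_u\Delta t^2$. The delicate term is $\alpha_0\Vert L_t^\alpha\rho^{k+1}\Vert$: the crude bound $\Vert\alpha_0 L_t^\alpha\rho^{k+1}\Vert\le 2\max_j\Vert\rho^j\Vert=O(h^2)$ is too weak, because, fed into the $b_j$-recursion, it would contaminate the spatial error by the factor $b_{k-1}^{-1}\sim k^{\alpha}\sim\Delta t^{-\alpha}$. Instead I would use consistency once more, writing $L_t^\alpha\rho^{k+1}={}_0^C D_t^{\alpha}\rho(t_{k+1})-\tilde R^{\rho}_{k+1}$; since ${}_0^C D_t^{\alpha}$ commutes with the spatial operator $R_h$, one has $\Vert{}_0^C D_t^{\alpha}\rho(t_{k+1})\Vert=\Vert(I-R_h)\,{}_0^C D_t^{\alpha} u(t_{k+1})\Vert\le Ch^2\Vert{}_0^C D_t^{\alpha} u(t_{k+1})\Vert_2\le Ch^2$ by the regularity assumption, while $|\tilde R^{\rho}_{k+1}|\le c_\rho\Delta t^{2-\alpha}$ with $c_\rho=O(h^2)$ because $\tilde R^{\rho}$ depends on time derivatives of $\rho=(I-R_h)u$. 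This produces the crucial extra factor $\alpha_0$, so that $\alpha_0\Vert L_t^\alpha\rho^{k+1}\Vert\le C\Gamma(2-\alpha)\Delta t^{\alpha}h^2$ up to lower-order terms; altogether the per-step forcing is bounded by $\sigma:=C(\Delta t^{\alpha}h^2+\Delta t^2)$.

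Finally, with $\theta^0=0$, I would run the same induction as in the time-error proof to obtain $\Vert\theta^k\Vert\le b_{k-1}^{-1}\sigma$ for all $k$: the case $k=1$ is immediate, and in the inductive step the relations \eqref{eq:20} and the monotonicity of $(b_j)$ collapse $(1-b_1)b_{k-1}^{-1}+\sum_{j=1}^{k-1}(b_j-b_{j+1})b_{k-j-1}^{-1}\le(1-b_k)b_k^{-1}$, whence $\Vert\theta^{k+1}\Vert\le(1-b_k)b_k^{-1}\sigma+\sigma=b_k^{-1}\sigma$. Using $k^{-\alpha}b_{k-1}^{-1}\le(1-\alpha)^{-1}$ and $k\Delta t\le T$ exactly as before gives $\Vert\theta^k\Vert\le(1-\alpha)^{-1}(k\Delta t)^{\alpha}(h^2+\Delta t^{2-\alpha})\le CT^{\alpha}(h^2+\Delta t^{2-\alpha})$, and combining with the bound for $\rho^k$ yields \eqref{eq:28}. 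The main obstacle is precisely this balancing act in the third step: one must organize the estimate of $\alpha_0\Vert L_t^\alpha\rho^{k+1}\Vert$ so that the intrinsic $\Delta t^{-\alpha}$ amplification of the fractional scheme is exactly absorbed and the $O(h^2)$ spatial accuracy survives --- the auxiliary $H^2$-regularity of ${}_0^C D_t^{\alpha} u$ is what makes this work.
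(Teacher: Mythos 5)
Your proposal is correct in outline and uses the same Ritz-projection splitting and the same error equation (your forcing $\alpha_0 P_h\bigl(L_t^\alpha\rho^{k+1}+\tilde R_{k+1}\bigr)$ is exactly the paper's $\alpha_0 P_h\bigl(\sigma_h^{k+1}+\tilde R_{k+1}\bigr)$ with $\sigma_h^{k+1}=(I-R_h)L_t^\alpha u(t_{k+1})$), but you resolve the recursion for $\theta^k$ in a genuinely different way. The paper simply invokes its stability result (Theorem~\ref{th:01}) to get $\Vert\theta_h^k\Vert\le\Vert\theta^0\Vert+\alpha_0 k\Vert\cdot\Vert_\infty$, bounds $\alpha_0\Vert\sigma_h^{k+1}\Vert\le 2Ch^2\max_j\Vert u(t_j)\Vert_2$ by expanding $\alpha_0L_t^\alpha u$ as the bounded $b_j$-combination, and then absorbs the resulting $k$-dependent factors ($\alpha_0 k c_u\Delta t^{2-\alpha}\sim T\Delta t$ and $2kCh^2$) into the final constants --- a step that, taken literally, does not deliver the advertised $O(\Delta t^{2-\alpha}+h^2)$ uniformly in $k$. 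You instead rerun the $b_j$-induction of the time-error theorem on $\theta^k$ and, crucially, refine the bound on $\alpha_0\Vert L_t^\alpha\rho^{k+1}\Vert$ by a second consistency expansion, $L_t^\alpha\rho^{k+1}={}^C_0D_t^\alpha\rho(t_{k+1})-\tilde R^\rho_{k+1}$ with $\Vert{}^C_0D_t^\alpha\rho(t_{k+1})\Vert\le Ch^2\Vert{}^C_0D_t^\alpha u(t_{k+1})\Vert_2$, so the per-step forcing gains the factor $\Delta t^\alpha$ and the $b_{k-1}^{-1}\sim k^\alpha$ amplification is exactly absorbed via $k^{-\alpha}b_{k-1}^{-1}\le(1-\alpha)^{-1}$ and $k\Delta t\le T$. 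This is arguably the sharper and more defensible route; its price is extra regularity ($\,{}^C_0D_t^\alpha u(t)\in H^2$ uniformly --- which from $u\in H^1(0,T;H^2)$ alone follows only for $\alpha<1/2$ --- and $H^2$-in-space control of $u_{tt}$ for the truncation error $\tilde R^\rho$ with constant $O(h^2)$), which goes beyond the theorem's literal hypothesis but is consistent with the paper's standing assumption of an "enough regular" solution (the paper's own bound $\tilde R_{k+1}\le c_u\Delta t^{2-\alpha}$ already requires more than the stated $H^1(H^2,[0,T])$). You should also state the discrete initialization ($u_h^0=R_hu_0$, or note $\Vert\theta^0\Vert\le Ch^2$ otherwise), since your induction uses $\theta^0=0$; this is a minor point the paper glosses over as well.
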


\begin{proof}
Let $ \varepsilon_h^k = u_h^k - u(t_k)$. We write
\begin{equation*}
\begin{split}
\varepsilon_h^k
&= u_h^k - R_hu(t_k) + R_hu(t_k) - u(t_k)\\
&= \theta_h^k + \rho_h^k, \quad k = 0,1, \ldots,
\end{split}
\end{equation*}
where $ \theta_h^k = u_h^k - R_hu(t_k)$ and
$\rho_h^k = R_hu(t_k) - u(t_k)$.
We make use of the following inequality that
can be find in \cite{AEL,AMYP}:
$$
\Vert R_h v - v \Vert \leq Ch^2 \Vert v \Vert_2.
$$
By the error estimate of the Ritz projection, we have
\begin{equation}
\label{eq:29}
\Vert \rho_h^k \Vert = \Vert R_hu(t_k) - u(t_k)
\Vert \leq Ch^2 \Vert u(t_k) \Vert_{2}.
\end{equation}
In order to bound $\theta_h^k$, we use the error
equation obtained from \eqref{eq:27}:
$$
L^{\alpha}_t \theta^{k+1}_h + A_h \theta^{k+1}_h
= L^{\alpha}_t u^{k+1}_h + A_hu^{k+1}_h
- L^{\alpha}_t R_hu(t_{k+1}) - A_hR_hu(t_{k+1}).
$$
From $ \Delta_h R_h = P_h \Delta$ (see \cite{VT}),
we hence obtain that
\begin{equation*}
\begin{split}
L^{\alpha}_t \theta^{k+1}_h + A_h \theta^{k+1}_h
&= P_h(f^{k+1}) - P_hA u(t_{k+1}) - L^{\alpha}_t R_hu(t_{k+1})\\
&= P_h(f^{k+1}) + (P_h-R_h)L^{\alpha}_t u(t_{k+1})
-P_hL^{\alpha}_t u(t_{k+1}) - P_hA u(t_{k+1})\\
&= P_h(f^{k+1}) + (P_h-R_h)L^{\alpha}_t u(t_{k+1})
- P_h(L^{\alpha}_t u(t_{k+1}) + A u(t_{k+1}))\\
&= P_h(f^{k+1}) + P_h(I-R_h)L^{\alpha}_t u(t_{k+1})
- P_h(f^{k+1} - \tilde{R}_{k+1})\\
&= P_h\left((I-R_h)L^{\alpha}_t u(t_{k+1})
+ \tilde{R}_{k+1}\right)\\
&= P_h \left(\omega_h^{k+1}\right),
\end{split}
\end{equation*}
where
$\omega_h^{k+1} = \sigma_h^{k+1} + \tilde{R}_{k+1}$
and
$\sigma_h^{k+1} = (I-R_h)L^{\alpha}_t u(t_{k+1})$.
Thus, we get
$$
L^{\alpha}_t \theta^{k+1}_h + A_h \theta^{k+1}_h
= P_h\left(\sigma_h^{k+1} + \tilde{R}_{k+1}\right).
$$
Using the stability result of Theorem~\ref{th:01}, we obtain that
$$
\Vert \theta_h^k\Vert \leq \Vert \theta^0\Vert+\alpha_0
k\left\Vert P_h\left(\sigma_h^{k+1}
+ \tilde{R}_{k+1}\right)\right\Vert_{\infty}.
$$
Hence, by using \eqref{eq:29}, we have
\begin{equation*}
\begin{split}
\alpha_0\Vert \sigma_h^{k+1}\Vert
&= \alpha_0\Vert R_hL_t^{\alpha}u(t_{k+1})-L_t^{\alpha}u(t_{k+1})\Vert\\
& \leq \alpha_0Ch^2\Vert L_t^{\alpha}u(t_{k+1})\Vert_2\\
& \leq Ch^2 \left\Vert u(t_{k+1})-(1-b_1)u(t_k)
-\sum_{j=1}^{k-1}(b_j-b_{j+1})u(t_{k-j})-b_{k}u(t_0)\right\Vert_2\\
& \leq Ch^2 \left( \Vert u(t_{k+1})\Vert_2+(1-b_1)\Vert u(t_k)\Vert_2
+\sum_{j=1}^{k-1}(b_j-b_{j+1})\Vert u(t_{k-j})\Vert_2+b_k\Vert u(t_0)\Vert_2\right)\\
& \leq Ch^2\left(1+(1-b_1)+\sum_{j=1}^{k-1}(b_j-b_{j+1})+b_k\right)
\max_{0 \leq j \leq k+1} \left\Vert u(t_j)\right\Vert_2\\
& \leq 2 C h^2 \max_{0 \leq j \leq k+1} \left\Vert u(t_j)\right\Vert_2.
\end{split}
\end{equation*}
Keeping in mind that $\Vert \tilde{R}_{k+1}\Vert \leq c_u \Delta t^{2-\alpha}$,
we obtain that
\begin{equation*}
\Vert \theta_h^k \Vert \leq \alpha_0 k c_u \Delta t^{2-\alpha}
+ 2k C h^2 \max_{0 \leq j \leq k+1} \left\Vert u(t_j)\right\Vert_2.
\end{equation*}
Together with this estimate, we get
\begin{equation*}
\begin{split}
\Vert \varepsilon_h^k\Vert
& \leq \Vert \rho_h^k \Vert + \Vert \theta_h^k \Vert\\
& \leq \alpha_0kc_u \Delta t^{2-\alpha} +2kCh^2\max_{0\leq j\leq k+1}
\Vert u(t_j)\Vert_2 + Ch^2\Vert u(t_k)\Vert_2\\
& \leq \alpha_0kc_u \Delta t^{2-\alpha}
+ ch^2\max_{0\leq j\leq k+1}\Vert u(t_j)\Vert_2.
\end{split}
\end{equation*}
Hence, \eqref{eq:28} is proved.
\end{proof}


\section{Numerical validation}
\label{4}

For completeness, our implementation is briefly described here.


\subsection{Implementation}
\label{sec:41}

Considering problem \eqref{eq:112}, we express the function $u^{k+1}_h$
in terms of the finite piecewise linear elements, tent-line, global
interpolation functions $\phi_j(x)$, $j = 0, 1, \ldots, N$,
\begin{equation}
\label{eq:113}
u^{k+1}_h(x) = \sum_{j=0}^{N} u_j(t_{k+1}) \phi_j(x),
\end{equation}
where $u_j(t_{k+1}) = u(x_j, t_{k+1})$ are unknowns of the numerical solution
and $\phi_j$ are the global interpolation functions satisfying
the cardinal interpolation property
$$
\phi_j(x_i) = \delta_{ij} \quad \forall\,\,
i, j \in \left\{0, 1, \ldots, N\right\}
$$
with $\delta_{ij}$ the Kronecker-delta symbol.
By combining \eqref{eq:112} and \eqref{eq:113}, and taking
into account the homogeneous Dirichlet boundary condition
$u^{k+1}_0 = u^{k+1}_N = 0$, we obtain the discrete system
\begin{equation}
\label{eq:114}
(\mathbb{M} + \alpha_0\mathbb{S}) \mathbb{U}^{k+1}
= \alpha_0\mathbb{M}\textbf{f}^{k+1} + (1-b_1)\mathbb{M}\mathbb{U}^k
+ \mathbb{M}\sum_{j=1}^{k-1}(b_j-b_{j+1})\mathbb{U}^{k-j}
+b_k\mathbb{M}\mathbb{U}^0,
\end{equation}
where
$$
\mathbb{U}^k = [u^k_1,u^k_2,\ldots,u^k_{N-1}],
\,\, \textbf{f}^k = [f^k_1,f^k_2,\ldots,f^k_{N+1}],
$$
\begin{equation*}
\mathbb{M}= (M_{ij})_{1 \leq i, j \leq N}, M_{ij}
= (\phi_i,\phi_j), \quad S_{ij}
= (\nabla \phi_i, \nabla \phi_j).
\end{equation*}
Since the matrix $\mathbb{M} + \alpha_0\mathbb{S}$ is symmetric
positive definite, one can choose, for example,
the conjugate gradient method to solve \eqref{eq:114}.


\subsection{Numerical results}
\label{sec:42}

Now we present two numerical approximation
examples to confirm our theoretical statements.
The main purpose is to check the convergence behavior
of the discrete solution with respect to the time step
$\Delta t$ and the space step $\Delta x$ used in the computations.

\begin{example}
\label{ex:01}
Consider the time-fractional partial differential equation
\begin{align*}
& ^C_0 D^{\alpha}_t u(x, t) - \Delta u(x, t)
= f(x, t), \quad t \in [0,T], \quad 0<x<1,\\
& u(x, 0) = u_0(x), \quad 0<x<1,\\
& u(0, t) = u(1, t) = 0, \quad t \in [0, T].
\end{align*}
The right hand side $f$ and initial condition are selected as
$$
f(x,t)=\frac{2}{\Gamma(3-\alpha)} t^{2-\alpha}
\sin(2\pi x) + 4\pi^2 t^2 \sin(2\pi x), ~~~~~~ u_0(x) = 0.
$$
It is verified that the exact solution to the problem is
$$
u(x, t) = t^2 \sin(2\pi x).
$$
The numerical results have been given by choosing $\Delta x=0.001$, 
$T=1$, and $\Delta t=0.01$, where $N=T/\Delta t$.
Let $u^n$ denote the approximate solution, $u(t_n)$ the exact solution, 
and $\varepsilon^n$ the error at $t=t_n$, that is, $\varepsilon^n=u^n -u(t_n)$. 
Then we obtain Table~\ref{Table01} with the exact solution, the approximate 
solution, and the error for $\alpha = 0.1,0.5,0.9$. We plot the exact solution, 
the approximate solution, and the error, for $\alpha = 0.1,0.5,0.9$, 
in Figures~\ref{fig:ex01}, \ref{fig:ex011}, and \ref{fig:ex0111}.
\begin{figure}
\subfigure[The exact solution $u$.]{\label{Fig:01}\includegraphics[scale=0.50]{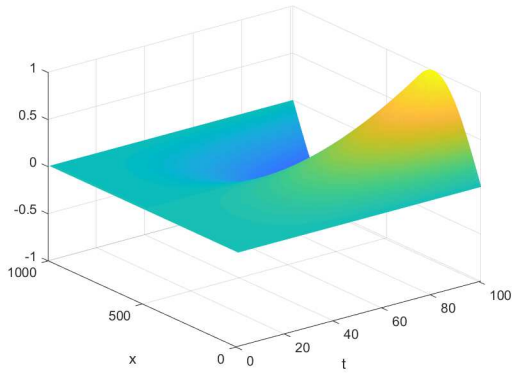}}
\subfigure[The approximate solution $u^n$.]{\label{Fig:02}\includegraphics[scale=0.50]{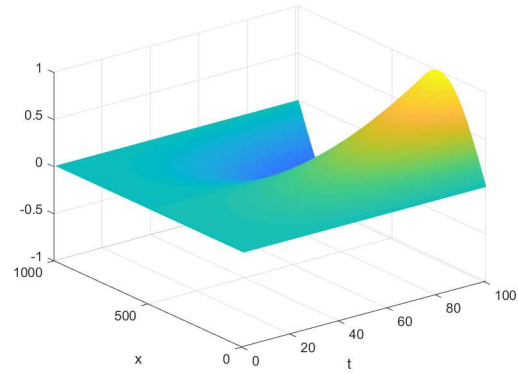}}
\subfigure[The error between $u$ and $u^n$.]{\label{Fig:03}\includegraphics[scale=0.5]{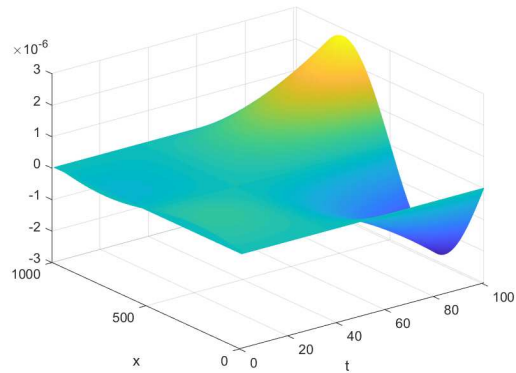}}
\caption{Results for the problem of Example~\ref{ex:01} with $\alpha = 0.1$.}
\label{fig:ex01}
\end{figure}
\begin{figure}
\subfigure[The approximate solution $u^n$.]{\includegraphics[scale=0.50]{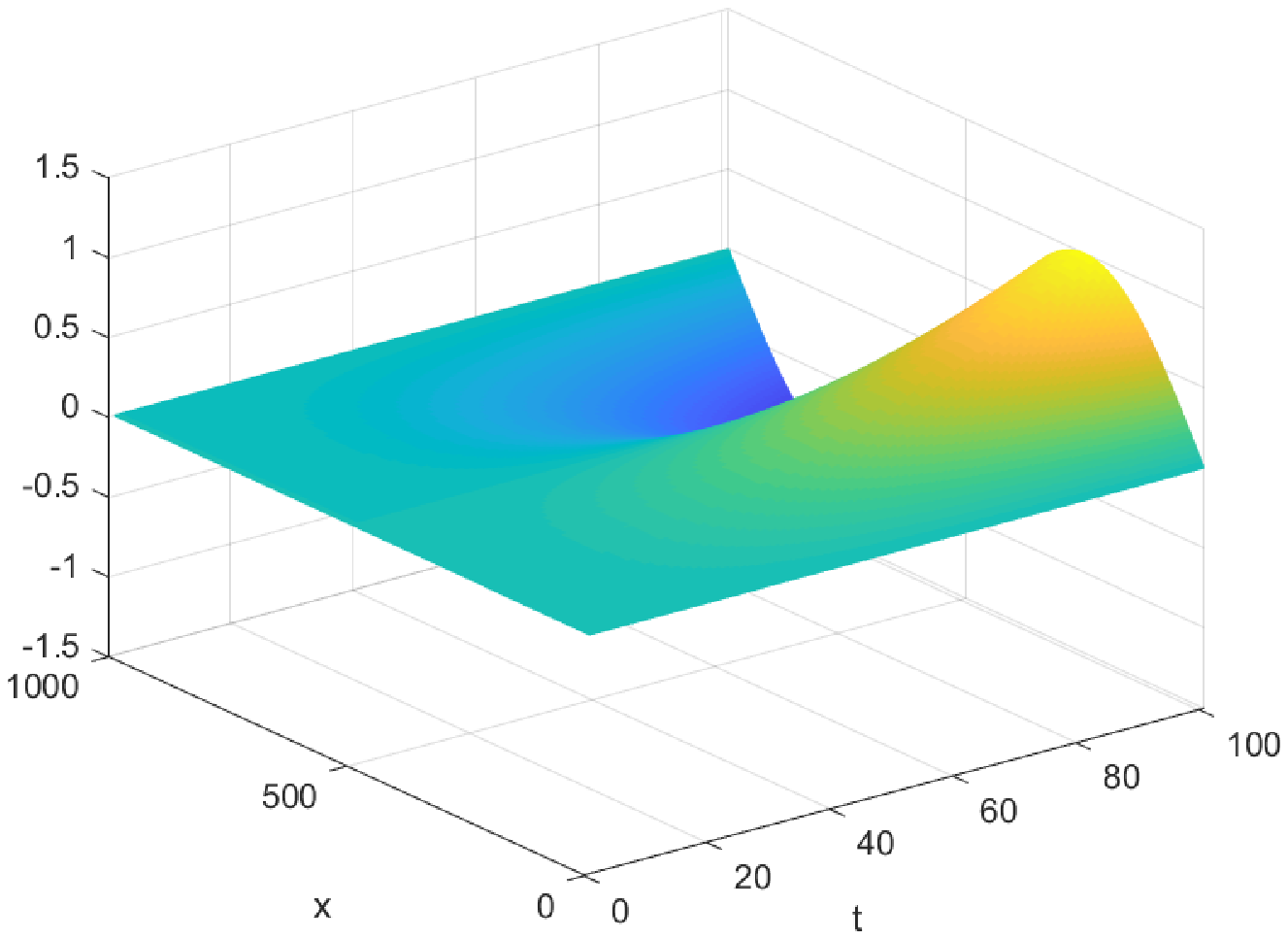}}
\subfigure[The error between $u$ and $u^n$.]{\includegraphics[scale=0.5]{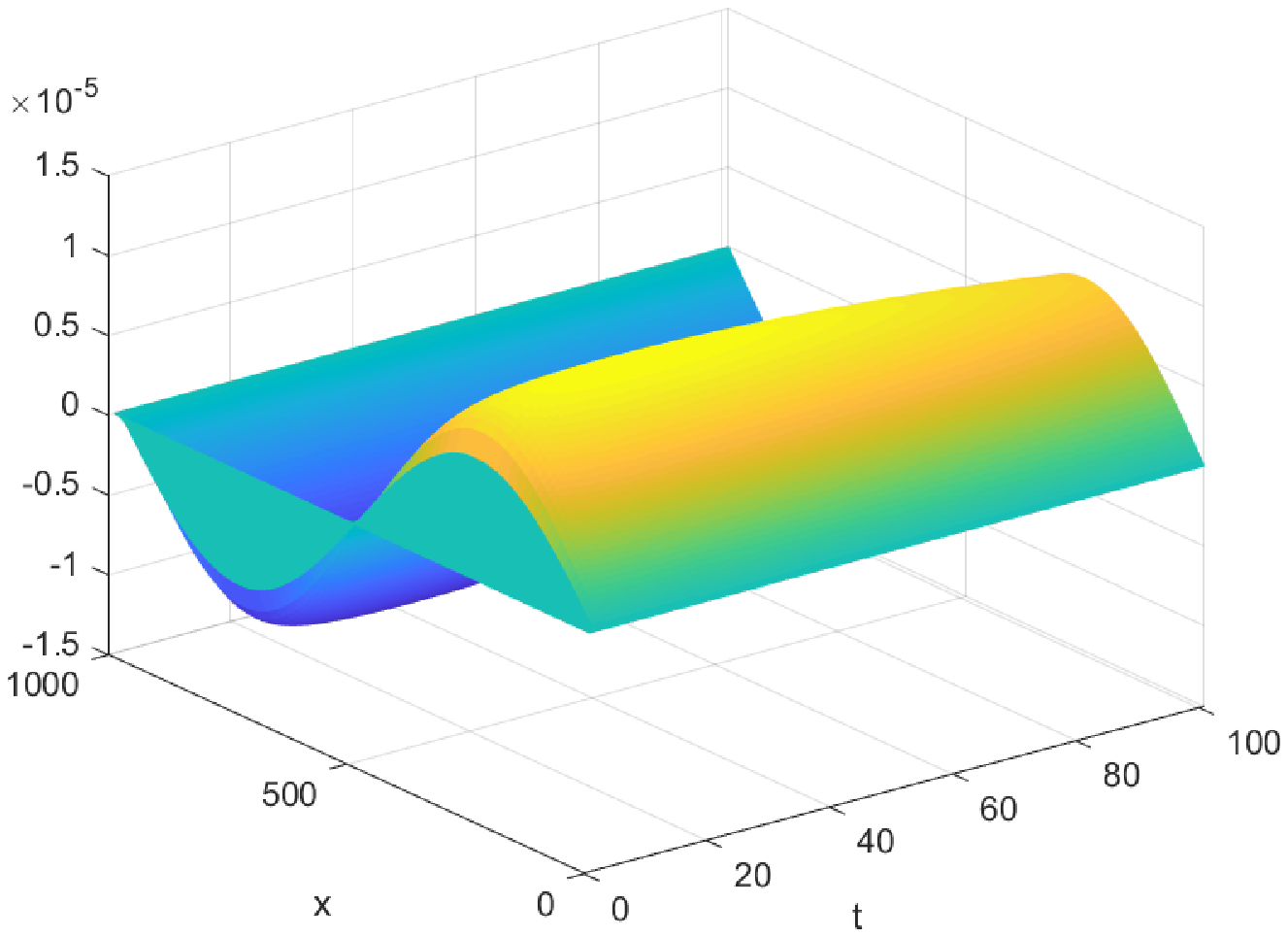}}
\caption{Results for the problem of Example~\ref{ex:01} with $\alpha = 0.5$.}
\label{fig:ex011}
\end{figure}
\begin{figure}
\subfigure[The approximate solution $u^n$.]{\includegraphics[scale=0.50]{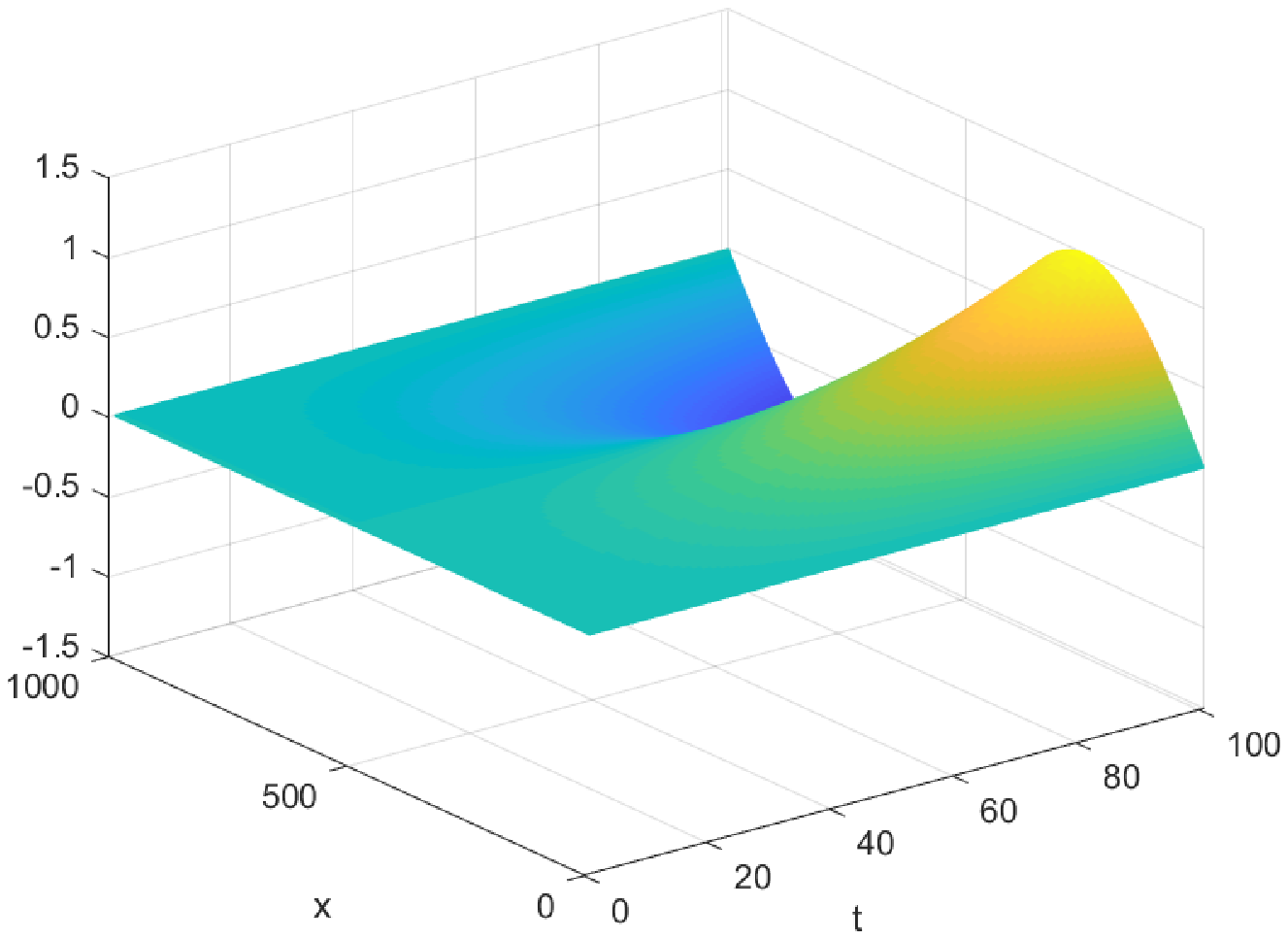}}
\subfigure[The error between $u$ and $u^n$.]{\label{Fig:07}\includegraphics[scale=0.5]{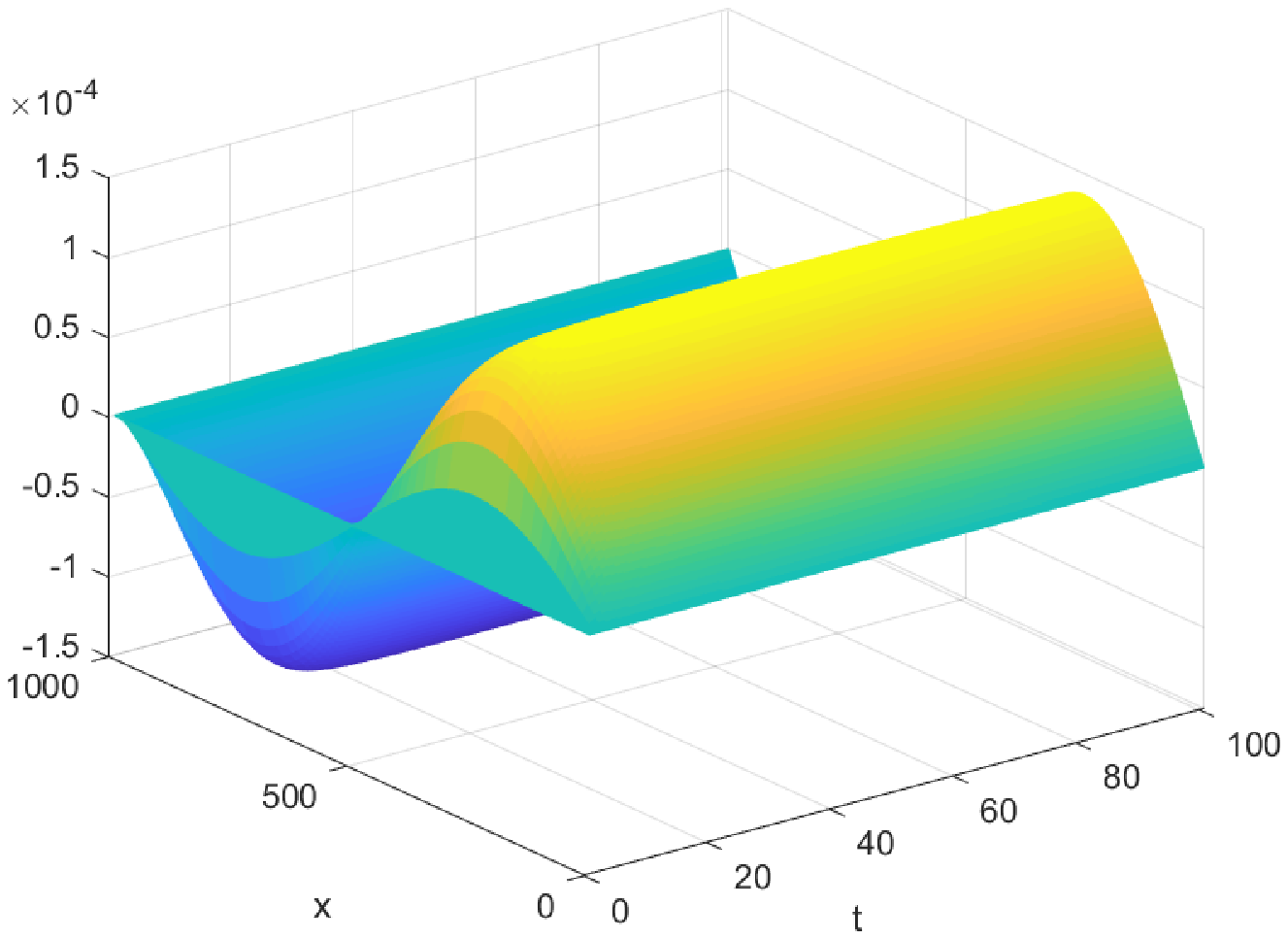}}
\caption{Results for the problem of Example~\ref{ex:01} with $\alpha = 0.9$.}
\label{fig:ex0111}
\end{figure}
\begin{table}[ht]
\center
\caption{Results for the problem of Example~\ref{ex:01} with $\alpha = 0.1,0.5,0.9$.\label{Table01}}
\begin{tabular}{|c|c|c|c|c|} \cline{1-5} \hline
\multicolumn{1}{ |c| }{$x_i$} & {exact sol} & {$\alpha$} & {approximate sol} & {error} \\ \cline{1-5}
\multicolumn{1}{ |c|  }{\multirow{3}{*}{$x_{100}$ }  }  &  &
\multicolumn{1}{ |c| }{0.1} & {$0.571093$} & {$2.10^{-6}$} \\ \cline{3-5}
\multicolumn{1}{ |c }{}     &
\multicolumn{1}{ |c| }{$0.571095$} & {0.5} & {$0.571100$} & {$-5.10^{-6}$} \\ \cline{3-5}
\multicolumn{1}{ |c| }{}                &        &
\multicolumn{1}{ |c| }{0.9} & {$0.571175$} & {$-8.10^{-5}$} \\ \cline{1-5}
\multicolumn{1}{ |c|  }{\multirow{3}{*}{$x_{300}$}}  &  &
\multicolumn{1}{ |c| }{0.1} & {$0.934012$} & {$3.10^{-6}$}  \\ \cline{3-5}
\multicolumn{1}{ |c }{}     &
\multicolumn{1}{ |c| }{0.934015} & {0.5} & {$0.934023$} & {$-8.10^{-6}$}      \\ \cline{3-5}
\multicolumn{1}{ |c| }{}                 &        &
\multicolumn{1}{ |c| }{0.9} & {$0.934145$} & {$-1,3.10^{-4}$}     \\ \cline{1-5}
\multicolumn{1}{ |c|  }{\multirow{3}{*}{$x_{500}$}}  &  &
\multicolumn{1}{ |c| }{0.1} & {$0.006158$} & {$0$}      \\ \cline{3-5}
\multicolumn{1}{ |c }{}                &
\multicolumn{1}{ |c| }{0.006158} & {0.5} & {$0.006158$} & {$0$}      \\ \cline{3-5}
\multicolumn{1}{ |c| }{}                &        &
\multicolumn{1}{ |c| }{0.9} & {$0.006159$} & {$-1.10^{-6}$}     \\ \cline{1-5}
\multicolumn{1}{ |c|  }{\multirow{3}{*}{$x_{700}$}}  &  &
\multicolumn{1}{ |c| }{0.1} & {$-0.930207$} & {$-3.10^{-6}$}      \\ \cline{3-5}
\multicolumn{1}{ |c }{}                &
\multicolumn{1}{ |c| }{-0.930209} & {0.5} & {$-0.930217$} & {$8.10^{-6}$}      \\ \cline{3-5}
\multicolumn{1}{ |c| }{}                &        &
\multicolumn{1}{ |c| }{0.9} & {$-0.930339$} & {$1,3.10^{-4}$}     \\ \cline{1-5}
\multicolumn{1}{ |c|  }{\multirow{3}{*}{$x_{900}$}}  &  &
\multicolumn{1}{ |c| }{0.1} & {$-0.581057$} & {$-2.10^{-6}$}      \\ \cline{3-5}
\multicolumn{1}{ |c }{}                &
\multicolumn{1}{ |c| }{-0.581059} & {0.5} & {$-0.581064$} & {$5.10^{-6}$}      \\ \cline{3-5}
\multicolumn{1}{ |c| }{}                &        &
\multicolumn{1}{ |c| }{0.9} & {$-0.581140$} & {$8,1.10^{-6}$}     \\ \cline{1-5}
\end{tabular}
\end{table}
\end{example}

\begin{example}
\label{ex:02}
Consider the time-fractional partial differential equation
\begin{align*}
& ^C_0 D^{\alpha}_t u(x, t) - \Delta u(x, t)
= f(x, t), \quad t \in [0,T], \quad 0<x<1,\\
& u(x, 0) = u_0(x), \quad 0<x<1,\\
& u(0, t) = u(1, t) = 0, \quad t \in [0, T],
\end{align*}
with the forcing term and initial condition given by
$$
f(x,t)=\frac{1}{\Gamma(1-\alpha)} \int_0^t \pi (t-s)^{-\alpha}
\cos(\pi s) \sin(\pi x) ds - \pi^2 \sin(\pi t) \sin(\pi x), 
\quad u_0(x) = 0.
$$
The exact solution is
$$
u(x, t) = \sin(\pi t) \sin(\pi x).
$$
In this second example, we choose $\alpha=0.2$, $\Delta x=0.01$,
$T=1$, $\Delta t=0.01$, and $N=T/\Delta t$.
Let $u^n$ denote the approximate solution, $u(t_n)$ the exact solution,
and $\varepsilon^n=u^n -u(t_n)$ the error at $t=t_n$. Figures~\ref{Fig:04}
and \ref{Fig:05} illustrate, respectively, the exact solution and the approximate solution
at $t_N=1$. Figure~\ref{Fig:06} presents a plot of the error at $t_N=1$.
\begin{figure}
\subfigure[The exact solution $u$.]{\label{Fig:04}\includegraphics[scale=0.15]{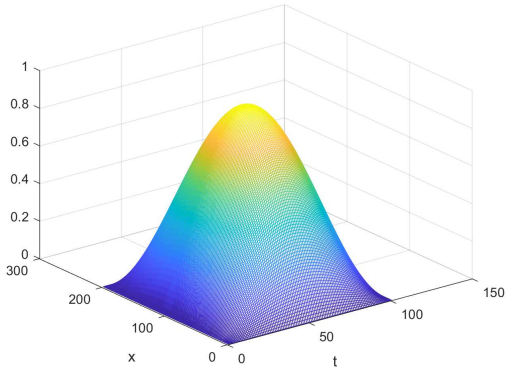}}
\subfigure[The approximate solution $u^n$.]{\label{Fig:05}\includegraphics[scale=0.15]{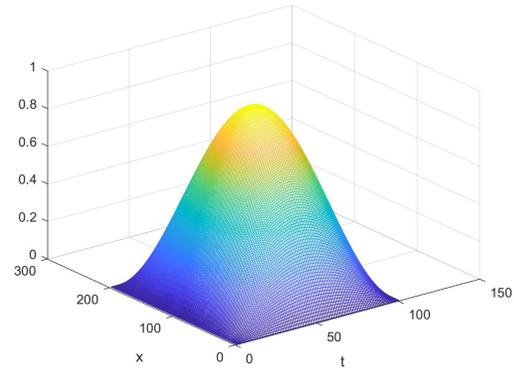}}
\subfigure[The error between $u$ and $u^n$.]{\label{Fig:06}\includegraphics[scale=0.2]{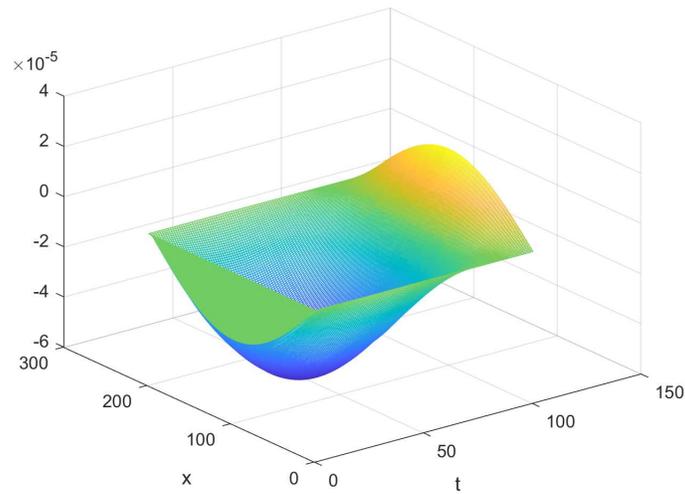}}
\caption{Results for the problem of Example~\ref{ex:02}.}
\label{fig:ex02}
\end{figure}
\end{example}


\section{Conclusion}
\label{5}

We have investigated a finite element method to Caputo
time-fractional diffusion partial differential equations.
A stability analysis is carried out and a convergent estimate is analyzed.
We obtain error estimates in the $L_2$-norm between the exact solution
and the approximate solutions in the fully discrete case.
Two numerical examples are implemented and the numerical results are shown
to be consistent with the theoretical results.


\section*{Acknowledgements}

The authors were supported by the Center for Research and Development in
Mathematics and Applications (CIDMA) of University of Aveiro, through 
\emph{Funda\c{c}\~ao para a Ci\^encia e a Tecnologia} (FCT), 
within project UID/MAT/04106/2019. They are very grateful
to two anonymous referees, for careful reviews of their paper, 
and for the comments, corrections, and suggestions, which 
substantially helped them to improve the quality of the paper. 



\medskip


\end{document}